\documentclass[10pt,conference]{ita_style}
\usepackage{cite}
\usepackage{amsmath,amssymb,amsfonts}
\usepackage{algorithmic}
\usepackage{graphicx}
\usepackage{textcomp}

\newtheorem{prop}{Proposition}
\DeclareMathOperator*{\argmin}{argmin}
% If the IEEEtran.cls has not been installed into the LaTeX system files,
% manually specify the path to it:
% \documentclass[conference]{IEEEtran}

\begin{document}

% paper title
\title{Matrix Completion for Structured Observations}

% author names and affiliations
% use a multiple column layout for up to three different
% affiliations
\author{\authorblockN{Denali Molitor}
\authorblockA{Department of Mathematics\\
University of California, Los Angeles\\
Los Angeles, CA 90095, USA \\
Email: dmolitor@math.ucla.edu}
\and
\authorblockN{Deanna Needell}
\authorblockA{Department of Mathematics\\
University of California, Los Angeles\\
Los Angeles, CA 90095, USA \\
Email: deanna@math.ucla.edu}}

% make the title area
\maketitle

\begin{abstract}
The need to predict or fill-in missing data, often referred to as matrix completion, is a common challenge in today's data-driven world. Previous strategies typically assume that no structural difference between observed and missing entries exists. Unfortunately, this assumption is woefully unrealistic in many applications. For example, in the classic Netflix challenge, in which one hopes to predict user-movie ratings for unseen films, the fact that the viewer has not watched a given movie may indicate a lack of interest in that movie, thus suggesting a lower rating than otherwise expected. We propose adjusting the standard nuclear norm minimization strategy for matrix completion to account for such structural differences between observed and unobserved entries by regularizing the values of the unobserved entries. We show that the proposed method outperforms nuclear norm minimization in certain settings.
\end{abstract}

\section{Introduction}
Data acquisition and analysis is ubiquitous, but data often contains errors and can be highly incomplete. For example, if data is obtained via user surveys, people may only choose to answer a subset of questions. Ideally, one would not want to eliminate surveys that are only partially complete, as they still contain potentially useful information. For many tasks, such as certain regression or classification tasks, one may require complete or completed data \cite{missingData}. Alternatively, consider the problem of collaborative filtering, made popular by the classic Netflix problem \cite{Netflix, NetflixLessons, RecommenderSys}, in which one aims to predict user ratings for unseen movies based on available user-movie ratings. In this setting, accurate data completion is the goal, as opposed to a data pre-processing task. Viewing users as the rows in a matrix and movies as the columns, we would like to recover unknown entries of the resulting matrix from the subset of known entries. This is the goal in many types of other applications, ranging from systems identification \cite{liu2009interior} to sensor networks \cite{biswas2006semidefinite,schmidt1986multiple,singer2008remark}. This task is known as \textit{matrix completion} \cite{recht2011simpler}.
%In matrix completion, one aims to recover an unknown matrix from only a subset of its entries. More generally, we aim to make predictions about values of the missing entries based on the observed entries. 
If the underlying matrix is low-rank and the observed entries are sampled uniformly at random, one can achieve exact recovery with high probability under mild additional assumptions by using nuclear norm minimization (NNM) \cite{candes,fazel,recht,gross,plan}. 

For many applications, however, we expect \textit{structural differences}  between the observed and unobserved entries, which violate these classical assumptions.  By structural differences, we mean that whether an entry is observed or unobserved need not be random or occur by some uniform selection mechanism. Consider again the Netflix problem. Popular, or well-received movies are more likely to have been rated by many users, thus violating the assumption of uniform sampling of observed entries across movies. On the flip side, a missing entry may indicate a user's lack of interest in that particular movie. Similarly, in sensor networks, entries may be missing because of geographic limitations or missing connections; in survey data, incomplete sections may be irrelevant or unimportant to the user.  In these settings, it is then reasonable to expect that missing entries have lower values\footnote{Of course, some applications will tend to have \textit{higher} values in missing entries, in which case our methods can be scaled accordingly.} than observed entries.

In this work, we propose a modification to the traditional NNM for matrix completion that still results in a semi-definite optimization problem, but also encourages lower values among the unobserved entries. We show that this method works better than NNM alone under certain sampling conditions.

\subsection{Nuclear Norm Matrix Completion}
Let $M\in \mathbb{R}^{n_1\times n_2}$ be the unknown matrix we would like to recover and $\Omega$ be the set of indices of the observed entries. Let 
$\mathcal{P}_\Omega : \mathbb{R}^{n_1\times n_2} \to \mathbb{R}^{n_1\times n_2}$, where 
\[[\mathcal{P}_\Omega]_{ij} = \begin{cases} M_{ij} & (i,j)\in \Omega\\ 0 &  (i,j)\not\in \Omega \end{cases}\] 
as in \cite{candes}.
In many applications, it is reasonable to assume that the matrix $M$ is low-rank. 
For example, we expect that relatively few factors contribute to a user's movie preferences as compared to the number of users or number of movies considered. Similarly, for health data, a few underlying features may contribute to many observable signs and symptoms.

The minimization, 
\[\widehat M = \argmin_A \text{rank}(A) \text{ s.t. } \mathcal{P}_\Omega(A) = \mathcal{P}_\Omega(M)\]
recovers the lowest rank matrix that matches the observed entries exactly.
Unfortunately, this minimization problem is NP-hard, so one typically uses the convex relaxation 
\begin{equation}\label{standardNNM}
\widehat M = \argmin_A ||A||_* \text{ s.t. } \mathcal{P}_\Omega(A) = \mathcal{P}_\Omega(M),
\end{equation}
where $||\cdot ||_*$ is the nuclear norm, given by the sum of the singular values, i.e.
$||X||_* := \sum_i \sigma_i(X)$ \cite{candes, fazel, plan, recht}.

\subsection{Matrix Completion for Structured Observations}
We propose adding a regularization term on the unobserved entries to promote adherence to the structural assumption that we expect these entries to be close to 0. We solve 
\begin{equation}\label{min}
\widetilde M = \argmin_A ||A||_* + \alpha ||\mathcal{P}_{\Omega^C}(A)|| \text{ s.t. } \mathcal{P}_\Omega(A) = \mathcal{P}_\Omega(M),
\end{equation}
where $\alpha>0$ and $||\cdot || $ is an appropriate matrix norm. For example, if we expect most of the unobserved entries to be 0, but a few to be potentially large in magnitude, the entrywise $L_1$ norm $||M||_1 = \sum_{ij} |M_{ij}|$ is a reasonable choice.

\subsection{Matrix Completion with Noisy Observations}

In reality, we expect that our data is corrupted by some amount of noise. We assume the matrix $M$, that we would like to recover, satisfies
\[\mathcal{P}_\Omega Y = \mathcal{P}_\Omega M +\mathcal{P}_\Omega Z,\] where $ \mathcal{P}_\Omega Y$ are the observed values, $M$ is low-rank and $\mathcal{P}_\Omega Z$ represents the noise in the observed data.
In \cite{plan}, Cand\'es and Plan suggest using the following minimization to recover the unknown matrix:
\begin{equation}\label{min_noise}
\widehat M = \argmin_A  ||A||_* \text{ s. t. }||\mathcal{P}_\Omega(M-A)||_F< \delta.
\end{equation}
Recall, $||X||_F = \sqrt{\sum_{ij} X_{ij}^2}$.
The formulation above is 
equivalent to
\begin{equation}\label{min_noise_solve}
\widehat M =\argmin_A ||\mathcal{P}_\Omega(M-A)||_F+\rho ||A||_*
\end{equation}
for some $\rho = \rho(\delta)$. The latter minimization problem is generally easier to solve in practice \cite{plan}.

In order to account for the assumption that the unobserved entries are likely to be close to zero, we again propose adding a regularization term on the unobserved entries and aim to solve 
\begin{equation}\label{min_noise_reg}
\widetilde M =\argmin_A ||\mathcal{P}_\Omega(M-A)||_F+\rho ||A||_*+ \alpha ||\mathcal{P}_{\Omega^C}(A)||.
\end{equation}
% Do we need both of the citations in this paragraph?

\section{Numerical Results}

\subsection{Recovery without Noise}
We first investigate the performance of  \eqref{min} when the observed entries are exact, i.e. there is no noise or errors in the observed values.
In Figure~\ref{fig.results}, we consider low-rank matrices $M\in \mathbb{R}^{30\times 30}$. To generate $M$ of rank $r$, we take $M= M_L M_R$, where $M_L \in \mathbb{R}^{30\times r}$ and $M_R\in \mathbb{R}^{r\times 30}$ are sparse matrices (with density 0.3 and 0.5, respectively) and whose nonzero entries are uniformly distributed at random between zero and one. We subsample from the zero and nonzero entries of the data matrix at various rates to generate a matrix with missing entries.
We compare performance of \eqref{min} using $L_1$ regularization on the unobserved entries with standard NNM and report the error ratio $||\widetilde M - M||_F / ||\widehat M - M||_F$ for various sampling rates, where $\widetilde{M}$ and $\widehat{M}$ are the solutions to \eqref{min} and \eqref{standardNNM}, respectively. The regularization parameter $\alpha$ used is selected optimally from the set $\{10^{-1},10^{-2},10^{-3},10^{-4}\}$ (discussed below). % of the zero and nonzero entries of $M$.
 Values below one in Figure \ref{fig.results} indicate that the minimization with $L_1$ regularization outperforms standard NNM. Results are averaged over ten trials. As expected, we find that if %the sampling rate of the zero entries is low and 
the sampling of the nonzero entries is high, then the modified method \eqref{min} is likely to outperform standard NNM. 

We choose the parameter $\alpha$, for the regularization term, to be optimal among $\alpha \in \{10^{-1},10^{-2},10^{-3},10^{-4}\}$ and report the values used in Figure~\ref{fig.alpha}. For large $\alpha$, the recovered matrix will approach that for which all unobserved entries are predicted to be zero, and as $\alpha$ becomes close to zero, recovery by \eqref{min} approaches that of standard NNM. 

When the sampling rate of the zero entries is low and the sampling of the nonzero entries is high, in addition to \eqref{min} outperforming NNM, we also see that a larger value for $\alpha$ is optimal, supporting the claim that regularization improves performance. Higher $\alpha$ values are also sometimes optimal when the nonzero sampling rate is nearly zero. If there are very few nonzero entries sampled then the low-rank matrix recovered is likely to be very close to the zero matrix. In this setting, we expect that even with standard NNM the unobserved entries are thus likely to be recovered as zeros and so a larger coefficient on the regularization term will not harm performance. When $\alpha$ is close to zero, the difference in performance is minimal, as the regularization will have little effect in this case.

\begin{figure}[htbp]
\centerline{\includegraphics[width = .5\textwidth]{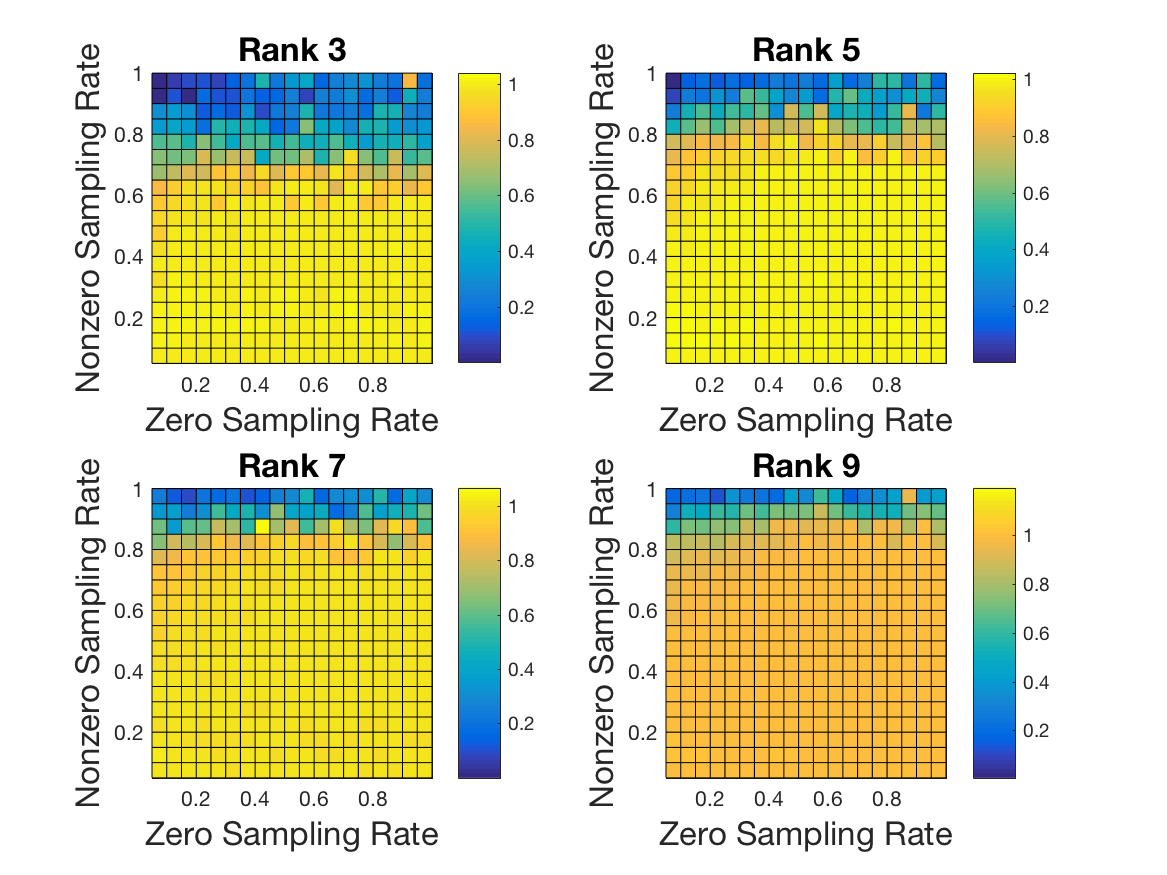}}
\caption{For $\widetilde M$ and $\widehat{M}$ given by \eqref{min} and \eqref{standardNNM}, respectively, with $L_1$ regularization on the recovered values for the unobserved entries, we plot $||\widetilde M - M||_F / ||\widehat M - M||_F$. We consider 30x30 matrices of various ranks and average results over ten trials, with $\alpha$ optimal among $\alpha \in \{10^{-1},10^{-2},10^{-3},10^{-4}\}$. 
%The matrices, $M$, are generated by multiplying two sparse matrices of a specified rank with uniformly random nonzero entries between 0 and 1. The density of nonzero entries used is 0.3 and 0.5. `Observed' entries are then sampled from the nonzero entries and zero entries at various rates indicated on the $x$ and $y$ axes.
}
\label{fig.results}
\end{figure}

\begin{figure}[htbp]
\centerline{\includegraphics[width = .5\textwidth]{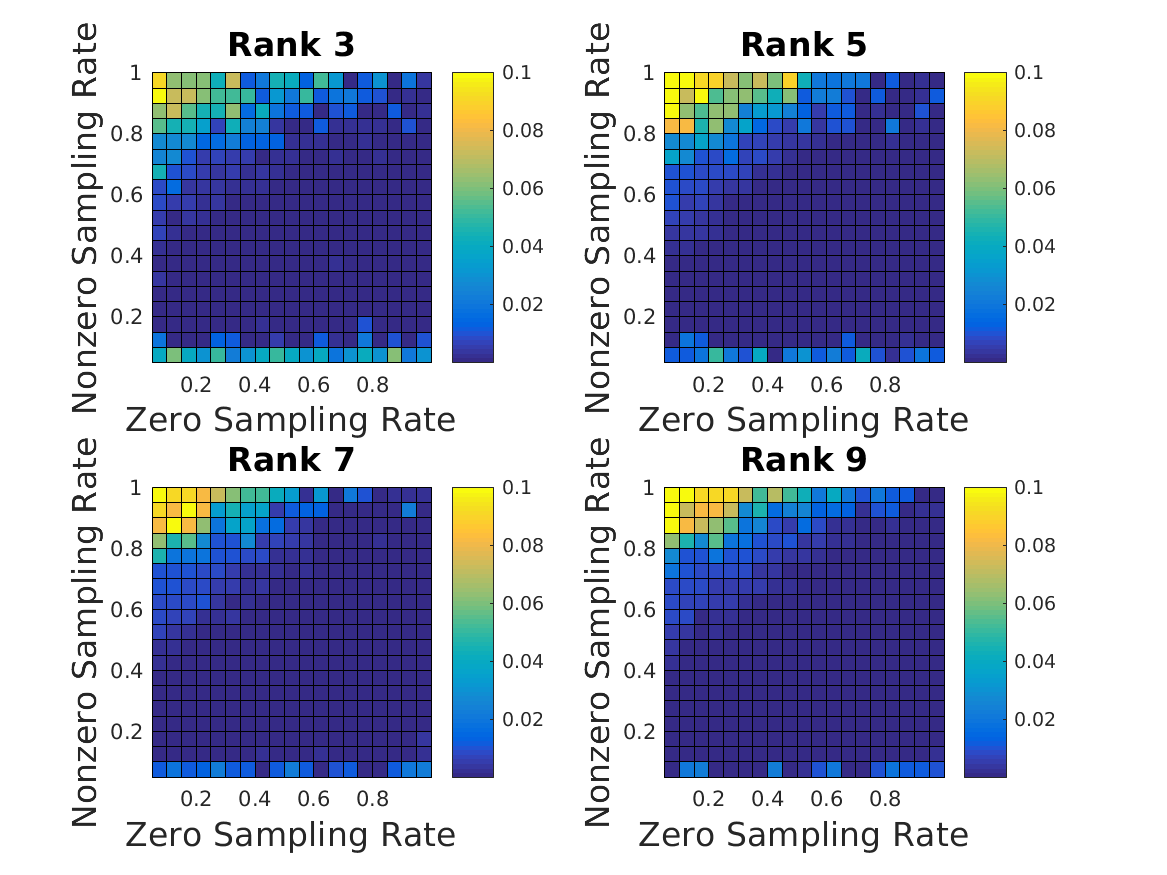}}
\caption{Average optimal $\alpha$ value among $\alpha \in \{10^{-1},10^{-2},10^{-3},10^{-4}\}$ for the minimization given in \eqref{min} with $L_1$ regularization on the recovered values for the unobserved entries. The matrices considered here are the same as in Figure~\ref{fig.results}. }
\label{fig.alpha}
\end{figure}

\subsection{Recovery with Noisy Observed Entries}
We generate matrices as in the previous section and now consider the minimization given in \eqref{min_noise_solve}. Suppose the entries of the noise matrix $Z$ are i.i.d. $N(0,\sigma^2)$. We set the parameter $\rho$, as done in \cite{plan}, to be
\[\rho = (\sqrt{n_{1}}+\sqrt{n_{2}})\sqrt{\frac{|\Omega|}{n_{1}n_{2}}}\sigma.\]
We specifically consider low-rank matrices $M\in \mathbb{R}^{30\times 30}$ generated as in the previous section and a noise matrix $Z$ with i.i.d. entries sampled from $ N(0,0.01)$. Thus we set 
$\rho = 2\sqrt{\frac{{|\Omega|}}{{30}}}\cdot 0.1.$ 
We again report $||\widetilde M - M||_F / ||\widehat M - M||_F$ for various sampling rates of the zero and nonzero entries of $M$ in Figure \ref{fig.resultsNoise}. 
Here, $\widehat M$ and $\widetilde M$ are given by \eqref{min_noise_solve} and \eqref{min_noise_reg} respectively.
We see improved performance with regularization when the sampling rate of the zero entries is low and the sampling of the nonzero entries is high.

\begin{figure}[htbp]
\centerline{\includegraphics[width = .5\textwidth]{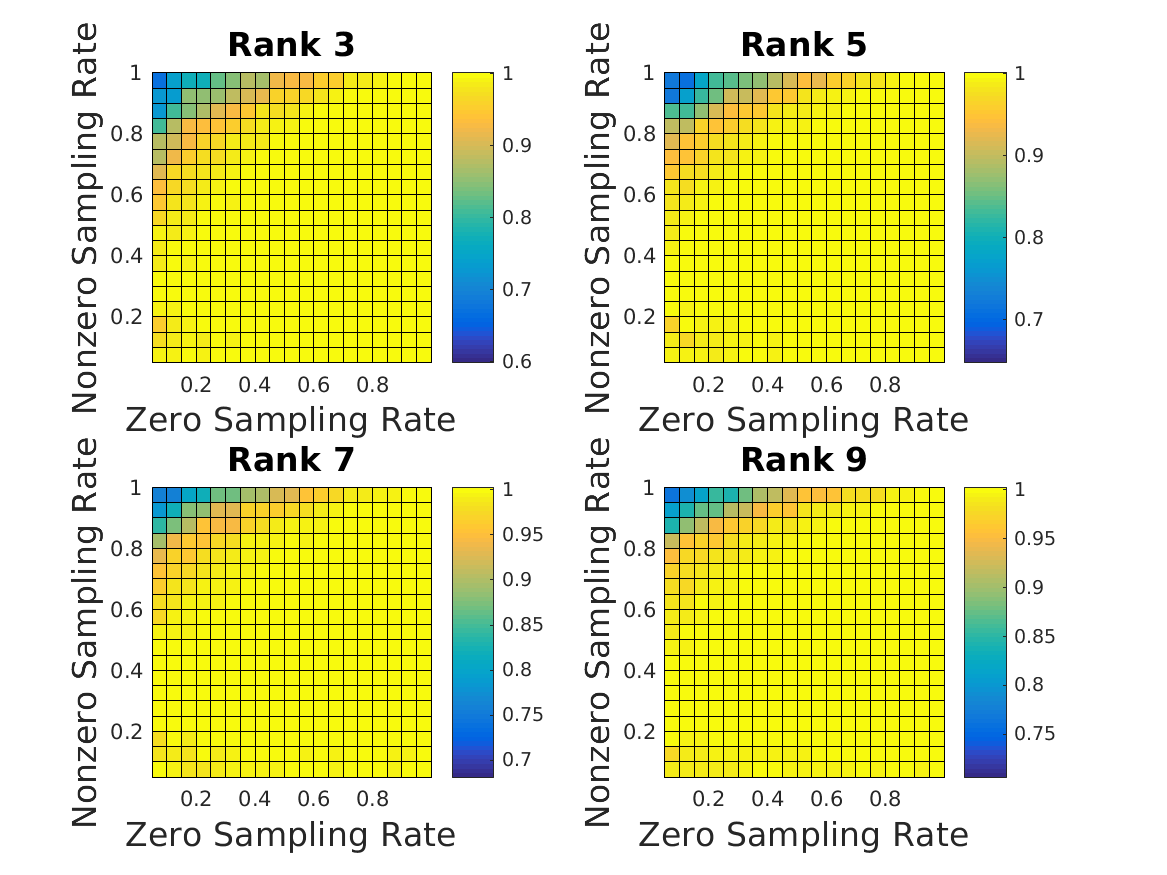}}
\caption{For $\widetilde M$ and $\widehat{M}$ given by \eqref{min} and \eqref{standardNNM}, respectively, with $L_1$ regularization on the recovered values for the unobserved entries, we plot $||\widetilde M - M||_F / ||\widehat M - M||_F$. We consider 30x30 matrices of various ranks with normally distributed i.i.d. noise with standard deviation $\sigma=0.1$ added. We average results over ten trials and with $\alpha$ optimal among $\alpha \in \{10^{-1},10^{-2},10^{-3},10^{-4}\}$. 
}
\label{fig.resultsNoise}
\end{figure}

\subsection{Matrix recovery of health data}
Next, we consider real survey data from 2126 patients responding to 65 particular questions provided by LymeDisease.org. Data used was obtained from the LymeDisease.org patient registry, MyLymeData, Phase 1, June 17, 2017. Question responses are integer values between zero and four and answering all questions was required, that is this subset of the data survey is complete (so we may calculate reconstruction errors). All patients have Lyme disease and survey questions ask about topics such as current and past symptoms, treatments and outcomes. For example, ``I would say
that currently in general my health is: 0-Poor, 1-Fair, 2-Good, 3-Very good, 4-Excellent." 
Although, this part of the data considered is complete, we expect that in general, patients are likely to record responses for particularly noticeable symptoms, while a missing response in a medical survey may indicate a lack of symptoms. Thus, in this setting,
$L_1$ regularization of the unobserved entries is a natural choice. % since the entries are known to take integer values and so we expect many of the unobserved entries to be 0.

Due to computational constraints, for each of the ten trials executed, we randomly sample 50 of these patient surveys to generate a 50x65 matrix. As in the previous experiments, we subsample from the zero and nonzero entries of the data matrix at various rates to generate a matrix with missing entries. We complete this subsampled matrix with both NNM \eqref{standardNNM} and \eqref{min} using $L_1$ regularization on the unobserved entries and report $||\widetilde M - M||_F / ||\widehat M - M||_F$, averaged over ten trials in Figure~\ref{fig.resultsLyme}.
The parameter $\alpha$, for the regularization term, is chosen to be optimal among $\alpha \in \{10^{-1},10^{-2},10^{-3},10^{-4}\}$ and we report the values used in Figure~\ref{fig.alphaLyme}.

The results for the Lyme disease data match closely those found in the synthetic experiments done with and without noise. Regularizing the $L_1$-norm of the unobserved entries improves performance if the sampling of non-zero entries is sufficiently high and sampling of zero entries is sufficiently low.

\begin{figure}[htbp] 
\centerline{\includegraphics[width = .4\textwidth]{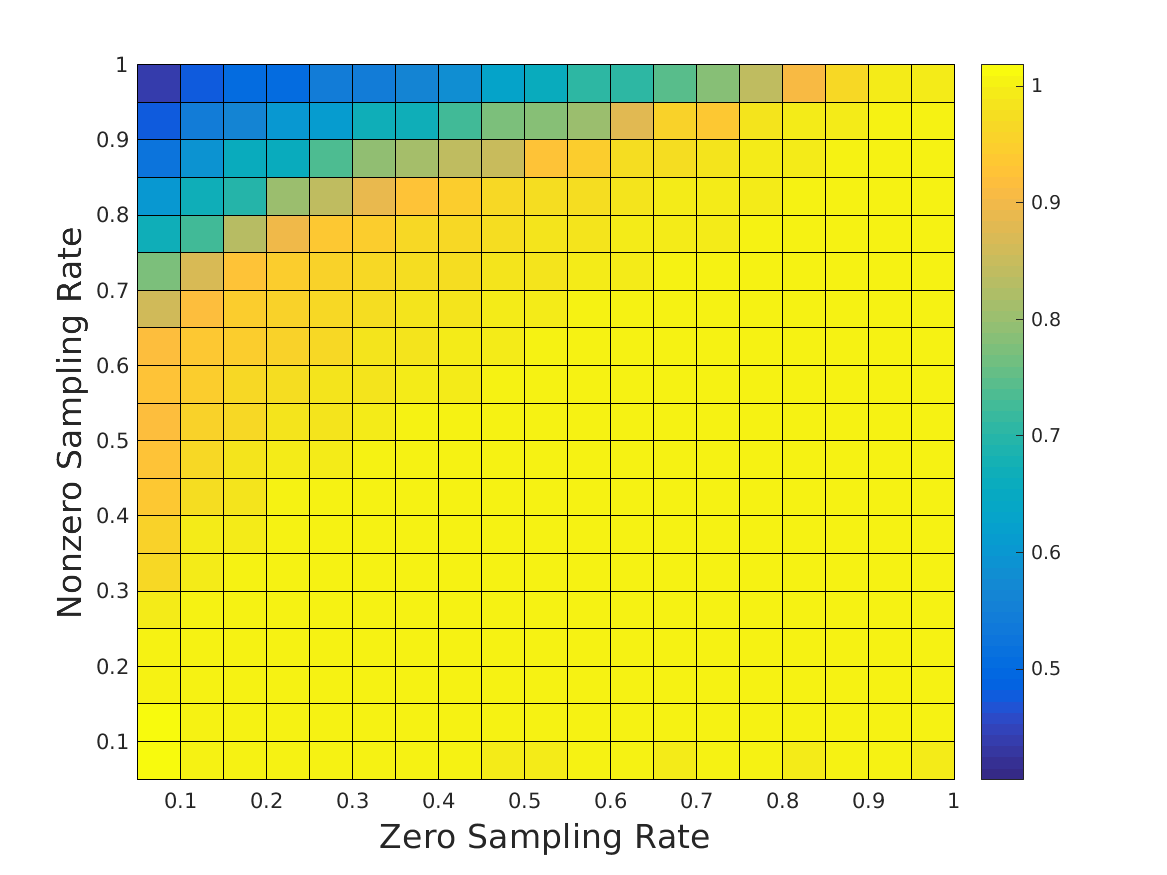}}
\caption{For $\widetilde M$ and $\widehat{M}$ given by \eqref{min} and \eqref{standardNNM}, respectively, with $L_1$ regularization on the recovered values for the unobserved entries, we plot $||\widetilde M - M||_F /||\widehat M - M||_F$. We consider 50 patient surveys with 65 responses each chosen randomly from 2126 patient surveys. We average results over ten trials and with $\alpha$ optimal among $\alpha \in \{10^{-1},10^{-2},10^{-3},10^{-4}\}$. 
}
\label{fig.resultsLyme}
\end{figure}

\begin{figure}[htbp]  
\centerline{\includegraphics[width = .4\textwidth]{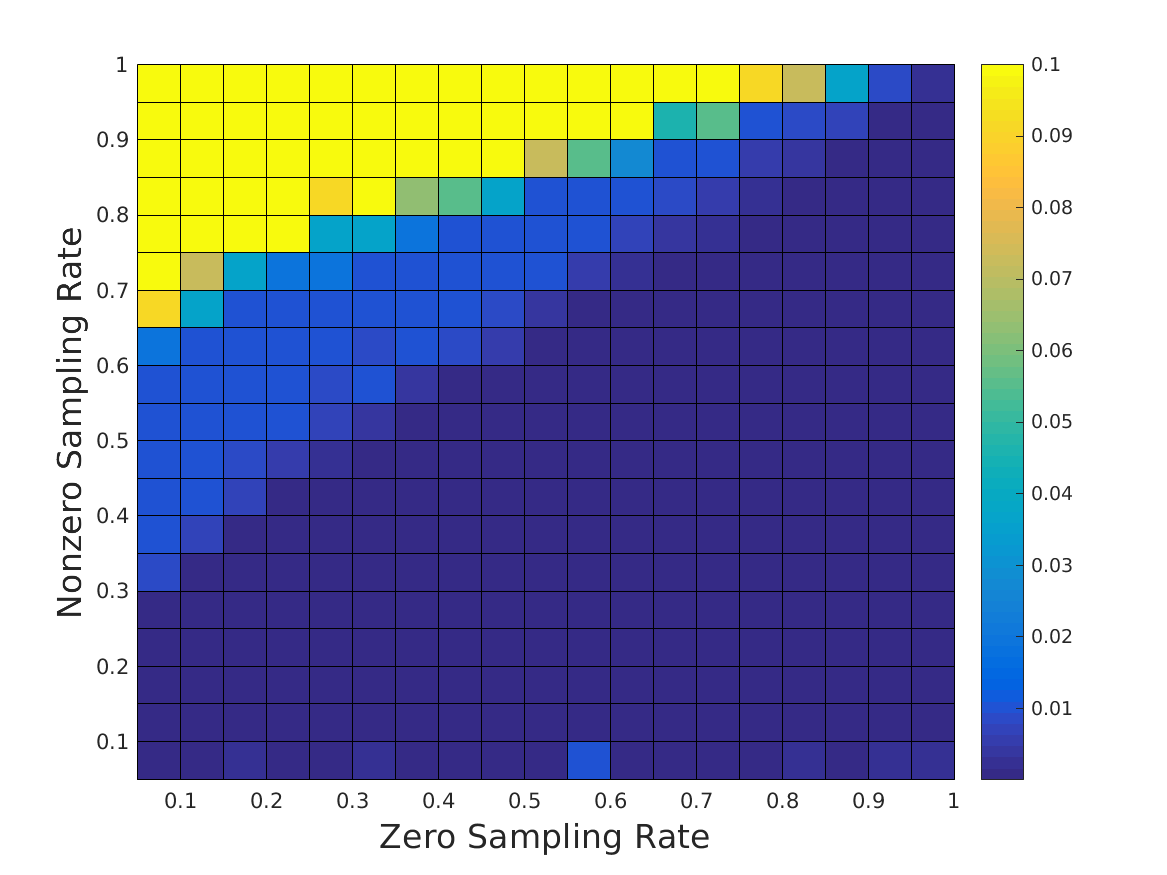}}
\caption{Average optimal $\alpha$ value among $\alpha \in \{10^{-1},10^{-2},10^{-3},10^{-4}\}$ for the minimization given in \eqref{min} with $L_1$ regularization on the recovered values for the unobserved entries in Lyme patient data.  }
\label{fig.alphaLyme}
\end{figure}

\section{Analytical Remarks}
We provide here some basic analysis of the regularization approach. 
First, in the simplified setting, in which all of the unobserved entries are exactly zero, %i.e. $\mathcal{P}_{\Omega^C}(M) =0$, 
the modified recovery given in \eqref{min} will always perform at least as well as traditional NNM.

\begin{prop} Suppose $M\in \mathbb{R}^{n_1\times n_2}$ and $\Omega$ gives the set of index pairs of the observed entries. Assume that all of the unobserved entries are exactly zero, i.e. $\mathcal{P}_{\Omega^C}(M) =0$. Then for 
\[\widehat M = \argmin ||A||_* \text{ s.t. } \mathcal{P}_\Omega(A) = \mathcal{P}_\Omega(M),\] and 
\[\widetilde M = \argmin ||A||_* + \alpha ||\mathcal{P}_{\Omega^C}(A)|| \text{ s.t. } \mathcal{P}_\Omega(A) = \mathcal{P}_\Omega(M), \]
we have 
\[||\widetilde M - M|| \le ||\widehat M - M||\] for any matrix norm $||\cdot ||$.
\end{prop}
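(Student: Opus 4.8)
The plan is to exploit the fact that, under the hypothesis $\mathcal{P}_{\Omega^C}(M)=0$, the reconstruction error of each method is supported entirely on the unobserved entries, so that the error norm coincides with the regularization norm. First I would observe that both $\widehat M$ and $\widetilde M$ satisfy the same data-fidelity constraint $\mathcal{P}_\Omega(A)=\mathcal{P}_\Omega(M)$, hence both agree with $M$ on $\Omega$ and their errors vanish there. Since $M$ is assumed to be zero off $\Omega$, this gives $\widehat M - M = \mathcal{P}_{\Omega^C}(\widehat M)$ and $\widetilde M - M = \mathcal{P}_{\Omega^C}(\widetilde M)$. The inequality to be proved therefore reduces to showing $\|\mathcal{P}_{\Omega^C}(\widetilde M)\| \le \|\mathcal{P}_{\Omega^C}(\widehat M)\|$ in the same norm $\|\cdot\|$ used in the regularizer.

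Next I would combine the two optimality conditions. Because the feasible sets coincide, $\widehat M$ is admissible for the regularized problem and $\widetilde M$ is admissible for plain NNM. Optimality of $\widetilde M$ for \eqref{min} gives
\[\|\widetilde M\|_* + \alpha\|\mathcal{P}_{\Omega^C}(\widetilde M)\| \le \|\widehat M\|_* + \alpha\|\mathcal{P}_{\Omega^C}(\widehat M)\|,\]
while optimality of $\widehat M$ for \eqref{standardNNM} gives $\|\widehat M\|_* \le \|\widetilde M\|_*$. Substituting the second inequality into the first cancels the nuclear-norm terms in a favorable direction, leaving $\alpha\|\mathcal{P}_{\Omega^C}(\widetilde M)\| \le \alpha\|\mathcal{P}_{\Omega^C}(\widehat M)\|$; dividing by $\alpha>0$ yields the reduced claim, and the first step then returns the stated bound.

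The argument is short, and the only point requiring care is the first reduction: the identification of the reconstruction-error norm with the regularization norm hinges precisely on the assumption that the unobserved block of $M$ is zero, so that $\mathcal{P}_{\Omega^C}(\widehat M - M)=\mathcal{P}_{\Omega^C}(\widehat M)$ and likewise for $\widetilde M$. I would also note that the chained variational inequalities remain valid for any choice of minimizers even when the $\argmin$ is not unique, and that nothing in the argument uses special properties of the nuclear norm or of $\|\cdot\|$ beyond their being norms, which is why the conclusion holds for an arbitrary matrix norm in the regularizer. The main obstacle is thus conceptual rather than computational—recognizing that the two optimality inequalities should be chained so that the $\|\cdot\|_*$ terms telescope—after which no real calculation remains.
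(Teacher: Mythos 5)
Your proof is correct and follows essentially the same route as the paper's: chain the two optimality inequalities so the nuclear-norm terms cancel, divide by $\alpha>0$, and then use $\mathcal{P}_{\Omega^C}(M)=0$ together with the shared feasibility constraint to identify the reconstruction errors with $\mathcal{P}_{\Omega^C}(\widehat M)$ and $\mathcal{P}_{\Omega^C}(\widetilde M)$. Your added remarks about non-unique minimizers and the norm in the conclusion matching the regularizer norm are accurate clarifications of points the paper leaves implicit.
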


\begin{proof}
% {\it Proof:} 
From the definitions of $\widehat M$ and $\widetilde M$, 
\[||\widehat M||_* \le ||\widetilde M||_*.\]% \text{ and }|| \mathcal{P}_{\Omega^C}(\widetilde M)|| \le ||\mathcal{P}_{\Omega^C}(\widehat M)||.\] 
% Note that from the definition of $\widehat{M}$,
Using the inequality above, 
\begin{align*}||\widetilde M||_*&+\alpha || \mathcal{P}_{\Omega^C}(\widetilde M)|| \le ||\widehat M||_*+\alpha||\mathcal{P}_{\Omega^C}(\widehat M)||\\
&\le ||\widetilde M||_*+\alpha||\mathcal{P}_{\Omega^C}(\widehat M)||.
\end{align*}
For $\alpha>0$, we have
\[||\mathcal{P}_{\Omega^C}(\widetilde M)|| \le ||\mathcal{P}_{\Omega^C}(\widehat M)||.\] 
The desired result then follows since 
\[\mathcal{P}_\Omega(\widetilde M) =\mathcal{P}_\Omega(\widehat M) = \mathcal{P}_\Omega(M)\]
 and under the assumption that $\mathcal{P}_{\Omega^C}(M) =0$, as
\[
||\widetilde M - M||  = || \mathcal{P}_{\Omega^C}(\widetilde M)||\le ||\mathcal{P}_{\Omega^C}(\widehat M)||=||\widehat M - M||.
\]
% \qed
\end{proof}

\subsection{Connection to Robust Principal Component Analysis (RPCA)}
The program \eqref{min} very closely resembles the method proposed in \cite{RPCA}, called Robust Principal Component Analysis (RPCA). RPCA is a modified version of traditional Principal Component Analysis that is robust to rare corruptions of arbitrary magnitude. In RPCA, one assumes that a low-rank matrix has some set of its entries corrupted. The goal is to recover the true underlying matrix despite the corruptions. More simply, for the observed matrix $Y\in \mathbb{R}^{n_1\times n_2}$, we have the decomposition
\[Y = L + S,\]
where $L$ is the low-rank matrix we would like to recover and $S$ is a sparse matrix of corruptions. The strategy for finding this decomposition proposed in \cite{RPCA} is
\begin{equation}\label{RPCA_orig}
\argmin_{L,S} ||L||_* + \alpha ||S||_1\text{ s.t. } L+S=Y.
\end{equation}
This method can be extended to the matrix completion setting, in which one would like to recover unobserved values from observed values, of which a subset may be corrupted. In this setting, \cite{RPCA} proposes solving the following minimization problem
\[\argmin_{L,S}  ||L||_* + \alpha ||S||_1\text{ s.t. } \mathcal{P}_{\Omega} (L+S)=\mathcal{P}_{\Omega} (Y).\]

We now return to our original matrix completion problem, in which we assume the observed entries to be exact. Let $M\in \mathbb{R}^{n_1\times n_2}$ again be the matrix we aim to recover. If we expect the unobserved entries of $M$ to be sparse, that is, only a small fraction of them to be nonzero, we can rewrite the minimization \eqref{min} in a form similar to RPCA in which we know the support of the corruptions is restricted to the set $\Omega^C$, i.e. $S= \mathcal{P}_{\Omega^C}(S)$. We then have, 
\begin{equation}\label{RPCAmin}
\argmin_{A,S} ||A||_* + \alpha ||S||_1 \text{ s.t. } A+S=\mathcal{P}_{\Omega}(M). 
\end{equation}
 This strategy differs from traditional RPCA in that we assume the observed data to be free from errors and therefore know that the corruptions are restricted to the set of unobserved entries.

%INCORPORATE RPCA VERSION THM HERE.\\
Directly applying Theorem 1.1 from \cite{RPCA}, we have the following result.
\begin{prop}\label{RPCAthm}
Suppose $M\in\mathbb{R}^{n_{1}\times n_{2}}$ and $M= U\Sigma V^*$ gives the singular value decomposition of $M$. Suppose also
\[\max_i ||U^* e_i ||^2 \le \frac{\mu r}{n_1}, \quad \max_i ||V^* e_i||^2 \le \frac{\mu r}{n_2},\]
and 
\[ ||UV^*||_\infty \le \sqrt{\frac{\mu r}{n_1 n_2}},\] where $r$ is the rank of $M$, $||X||_\infty = \max_{i,j} |X_{i,j}|$, $e_i$ is the $i^{th}$ standard basis vector and $\mu$ is the incoherence parameter as defined in \cite{RPCA}.
Suppose that the set of observed entries, $\Omega$, is uniformly distributed among all sets of cardinality of $m$ and the support set of $S_0$ of non-zero unobserved entries is uniformly distributed among all sets of cardinality $s$ contained in $\Omega^C$. Then there is a numerical constant $c$ such that with probability at least $1-cn^{-10}$ the minimization in \eqref{RPCAmin} with $\alpha = 1/\sqrt{n}$ achieves exact recovery, provided that 
\[\text{rank}(L_0)\le \rho_r n_{(2)} \mu^{-1}(\log n_{(1)})^{-2} \text{ and } s \le \rho_s n_{(1)}n_{(2)},\]
where $\rho_r$ and $\rho_s$ are positive numerical constants.
\end{prop}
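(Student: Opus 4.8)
The plan is to recognize the structured program \eqref{RPCAmin} as an ordinary instance of Principal Component Pursuit (PCP) and then invoke Theorem 1.1 of \cite{RPCA} verbatim. First I would form the fully known matrix $Y := \mathcal{P}_\Omega(M)$ (the observed data, zero-padded on $\Omega^C$) and observe that \eqref{RPCAmin} is exactly PCP applied to $Y$: minimize $||A||_* + \alpha ||S||_1$ subject to $A + S = Y$. The key point is that $Y$ admits the low-rank-plus-sparse decomposition
\[
Y = L_0 + S_0, \qquad L_0 := M, \qquad S_0 := -\mathcal{P}_{\Omega^C}(M),
\]
since $M = \mathcal{P}_\Omega(M) + \mathcal{P}_{\Omega^C}(M)$. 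Here $L_0$ is the rank-$r$ matrix we wish to recover, and $S_0$ is supported precisely on the $s$ unobserved coordinates at which $M$ does not vanish. Consequently, exact recovery of the pair $(L_0, S_0)$ by PCP returns $A = M$, recovering both the observed and the unobserved entries of $M$.

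Second, I would verify that the decomposition above satisfies the hypotheses of Theorem 1.1 of \cite{RPCA}. The incoherence bounds on $U$, $V$, and $UV^*$ are stated directly for $M = L_0$, so they transfer unchanged. Crucially, Theorem 1.1 constrains only the \emph{support} of $S_0$ and imposes no condition on its signs or magnitudes; hence the deterministic values $-M_{ij}$ on $\mathrm{supp}(S_0)$ pose no difficulty. The budgets $\mathrm{rank}(L_0) \le \rho_r n_{(2)} \mu^{-1}(\log n_{(1)})^{-2}$ and $s \le \rho_s n_{(1)} n_{(2)}$ and the choice $\alpha = 1/\sqrt{n}$ match those of \cite{RPCA} once we set $n = n_{(1)} = \max(n_1, n_2)$ and $n_{(2)} = \min(n_1, n_2)$.

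The one genuinely probabilistic step --- and the step I expect to be the main obstacle --- is reconciling the two-stage sampling model used here with the single-stage uniform-support model of \cite{RPCA}. In our setting $\Omega$ is uniform among size-$m$ sets and $\mathrm{supp}(S_0)$ is then uniform among size-$s$ subsets of $\Omega^C$, whereas Theorem 1.1 assumes $\mathrm{supp}(S_0)$ is uniform among \emph{all} size-$s$ subsets of the $n_1 \times n_2$ grid. I would close this gap with a short symmetry computation: for any fixed $T$ with $|T| = s$, summing over the admissible sets $\Omega \subseteq T^C$ gives
\[
\Pr[\mathrm{supp}(S_0) = T] = \frac{\binom{n_1 n_2 - s}{m}}{\binom{n_1 n_2}{m}\binom{n_1 n_2 - m}{s}},
\]
which is independent of $T$. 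Thus the marginal law of $\mathrm{supp}(S_0)$ is exactly uniform over all size-$s$ subsets, so Theorem 1.1 of \cite{RPCA} applies unconditionally and yields exact recovery with probability at least $1 - cn^{-10}$. The remainder is bookkeeping to match dimension conventions.
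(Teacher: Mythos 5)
Your proposal is correct and takes essentially the same route as the paper, whose proof consists of the single observation that Proposition~\ref{RPCAthm} is a direct application of Theorem~1.1 of \cite{RPCA} to the program \eqref{RPCAmin}. The extra details you supply---the identification $L_0 = M$, $S_0 = -\mathcal{P}_{\Omega^C}(M)$, and the marginalization argument showing the two-stage support model induces the uniform law on size-$s$ supports---are correct and merely make explicit what the paper leaves implicit.
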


%\begin{proof}
This proposition is a direct application of Theorem 1.1 in \cite{RPCA} to the program given by \eqref{RPCAmin}. Note that here, the corruptions are exactly the unobserved entries that are nonzero. Thus, if $s$, the number of nonzero unobserved entries is small, this result may be stronger than corresponding matrix completion results that instead depend on $m$, the larger, number of missing entries.

The authors of \cite{RPCA} note that RPCA can be thought of as a more challenging version of matrix completion. The reasoning being, that in matrix completion we aim to recover the set of unobserved entries, whose locations are known, whereas in the RPCA setting, we have a set of corrupted entries, whose locations are unknown, and for which we would like to both identify as erroneous and determine their correct values. Figure 1 of \cite{RPCA} provides numerical evidence that in practice RPCA does in fact require more stringent conditions to achieve exact recovery than the corresponding matrix completion problem. In image completion or repair, corruptions are often spatially correlated or isolated to specific regions of an image. In \cite{repair}, the authors provide experimental evidence that incorporating an estimate of the support of the corruptions aids in recovery. By the same reasoning, we expect that a stronger result than suggested by Proposition \ref{RPCAthm} likely holds, as we do not make use of the fact that we are able to restrict the locations of the corruptions (nonzero, unobserved entries) to a subset of the larger matrix.

\section{Discussion}
For incomplete data in which we expect that unobserved entries are likely to be 0, we find that regularizing the values of the unobserved entries when performing NNM improves performance under various conditions. 
This improvement in performance holds for both synthetic data, with and without noise, as well as for Lyme disease survey data. 
We specifically investigate the performance of $L_1$ regularization on the unobserved entries as it is a natural choice for many applications.
%These results suggest that this strategy could be useful in a variety of other data completion settings such as user preferences (eg. Netflix problem)

Testing the validity of methods, such as \eqref{min}, on real data is challenging, since this setting hinges on the assumption that unobserved data is structurally different than observed data and would require having access to ground truth values for the unobserved entries. In this paper, we choose to take complete data and artificially partition it into observed and unobserved entries.
Another way to manage this challenge is to examine performance of various tasks, such as classification or prediction, based on data that has been completed in different ways. In this setting, relative performance of different completion strategies will likely depend on the specific task considered. However, for many applications, one would like to complete the data in order to use it for a further goal. In this setting, judging the performance of the matrix completion algorithm by its effect on performance of the ultimate goal is very natural.

We offer preliminary arguments as to why we might expect the approach in \eqref{min} to work well under the structural assumption that unobserved entries are likely to be sparse or small in magnitude, however, stronger theoretical results are likely possible. 
For example, we show that regularizing the values of the unobserved entries when performing NNM improves performance in the case when all unobserved entries are exactly zero, but based on empirical evidence we expect improved performance under more general conditions.

A range of papers, including \cite{candes, fazel, recht, gross}, discuss the conditions under which exact matrix completion is possible under the assumption that the observed entries of the matrix are sampled uniformly at random. Under what reasonable structural assumptions on the unobserved entries might we still be able to specify conditions that will lead to exact recovery?
We save such questions for future work.

\section*{Acknowledgments}
The authors would like to thank LymeDisease.org for the use of data derived from MyLymeData to conduct this study. We would also like to thank the patients for their contributions to MyLymeData, and Anna Ma for her guidance in working with this data. In addition, the authors were supported by NSF CAREER DMS $\#1348721$, NSF BIGDATA DMS $\#1740325$, and MSRI NSF DMS
$\#1440140$. 

\bibliographystyle{myalpha}
\bibliography{dnbib}

\end{document}